\newcommand{\comment}[1]{}
\newtheorem{theorem}{Theorem}[section]
\newtheorem{lemma}[theorem]{Lemma}
\newtheorem{corollary}[theorem]{Corollary}
\newtheorem{definition}[theorem]{Definition}
\newtheorem{proposition}[theorem]{Proposition}
\newtheorem*{BNthm}{Brill-Noether Theorem}
\newtheorem*{GPthm}{Gieseker-Petri Theorem}
\newcommand{\Spec}{\mathrm{Spec}\,}
\begin{document}

\title{Towards a Tropical Proof of the Gieseker-Petri Theorem}
\author{Vyassa Baratham}
\author{David Jensen}
\author{Cristina Mata}
\author{Dat Nguyen}
\author{Shalin Parekh}
\date{}
\bibliographystyle{alpha}

\maketitle

\begin{abstract}
We use tropical techniques to prove a case of the Gieseker-Petri Theorem.  Specifically, we show that the general curve of arbitrary genus does not admit a Gieseker-Petri special pencil.
\end{abstract}

\section{Introduction}

A central object in the study of algebraic curves is the variety of linear series on a curve.  Given a smooth projective curve $X$, we write $\mathcal{G}^r_d (X)$ for the variety parameterizing linear series of degree $d$ and rank $r$ on $X$.  The nature of this variety for general curves is the central focus of two of the most celebrated theorems in modern algebraic geometry.

\begin{BNthm} \cite{GH}
If $X$ is a general curve of genus $g$, then dim $\mathcal{G}^r_d (X) = \rho = g-(r+1)(g-d+r)$.  If $\rho < 0$, then $\mathcal{G}^r_d (X)$ is empty.
\end{BNthm}

\begin{GPthm} \cite{Gieseker-Petri}
If $X$ is a general curve, then $\mathcal{G}^r_d (X)$ is smooth.
\end{GPthm}

These theorems differ from more classical results such as Riemann-Roch in that they concern \emph{general}, rather than arbitrary, curves.  As such, the original proofs due to Griffiths-Harris \cite{GH} and Gieseker \cite{Gieseker-Petri} make use of degeneration techniques.  These ideas were later refined by Eisenbud and Harris \cite{EHPetri}, giving a second proof of both theorems.  A subsequent proof, due to Lazarsfeld, avoids using degeneration arguments by working instead with curves on a K3 surface \cite{Laz}.

More recently, a team consisting of Cools, Draisma, Payne and Robeva provided an independent proof of the Brill-Noether Theorem using techniques from tropical geometry \cite{CDPR}.  More specifically, they use the theory of divisors on metric graphs, as developed by Baker and Norine in \cite{BakerNorine}, to construct a Brill-Noether general graph $\Gamma_g$ with first Betti number $g$.  Combining this with Baker's Specialization Lemma \cite{Baker}, which says that the rank of a divisor on a smooth curve over a discretely valued field jumps under specialization to the dual graph of the central fiber, they obtain a new proof of the Brill-Noether Theorem.

In this paper, we prove the $r=1$ case of the Gieseker-Petri Theorem using a similar approach.  In other words, we show that $\mathcal{G}^1_d (X)$ is smooth for the general curve $X$ of arbitrary genus.  To do this, we use the same metric graph $\Gamma_g$ that appears in \cite{CDPR}.  This graph, depicted below, consists of $g$ loops arranged in a chain.  Throughout, we assume that this graph has generic edge lengths -- specifically, that the ratio $\frac{\ell_i}{m_i}$ for each $i$ is not equal to the ratio of two positive integers whose sum is less than or equal to $2g-2$.  Given this, we prove the following:

\begin{theorem}
\label{mainthm}
The graph $\Gamma_g$ does not admit a positive-rank divisor $D$ such that $K_{\Gamma_g} - 2D$ is linearly equivalent to an effective divisor.
\end{theorem}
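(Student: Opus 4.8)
The plan is to argue entirely on the graph $\Gamma_g$: by Baker's Specialization Lemma and the discussion above, it suffices to prove the stated combinatorial fact, so suppose for contradiction that $D$ is a divisor on $\Gamma_g$ with Baker--Norine rank $r(D)\ge 1$ for which $K_{\Gamma_g}-2D$ is equivalent to an effective divisor $E$. Since rank is a linear-equivalence invariant we may take $D\ge 0$; set $d=\deg D$. Because $r(D)\ge 1$, the tropical Brill--Noether theorem of \cite{CDPR} (proved via this very graph) forces $\rho(g,1,d)=2d-g-2\ge 0$, hence $d\ge\lceil (g+2)/2\rceil$; and since $\deg E=2g-2-2d\ge 0$ (recall $\deg K_{\Gamma_g}=2g-2$), also $d\le g-1$. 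Thus $g\ge 4$ and $d$ lies in a bounded window, and the goal reduces to showing that no positive-rank class of such a degree satisfies $K_{\Gamma_g}-2D\ge 0$ up to equivalence.

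The first --- and main --- step is to pin down the shape of a positive-rank divisor on $\Gamma_g$. Label the vertices $w_0,\dots,w_g$ along the chain, so loop $i$ joins $w_{i-1}$ to $w_i$ along arcs of lengths $\ell_i,m_i$, and recall that $K_{\Gamma_g}$ is an explicit effective divisor supported at the vertices. Using the theory of $v$-reduced divisors of Baker--Norine \cite{BakerNorine} together with the explicit combinatorics of \cite{CDPR}, I would extract a normal form for $[D]$: its $w_0$-reduced representative carries some chips at $w_0$ and at most one chip on each loop, the position of that chip on loop $i$ is determined modulo $\ell_i+m_i$ by the coordinates of the class, and the condition $r(D)\ge 1$ translates into the combinatorial constraint of \cite{CDPR} on which loops are ``active'' --- concretely, the associated lingering lattice path must stay weakly above the relevant level, and, by the left--right symmetry of $\Gamma_g$, the $w_0$- and $w_g$-reduced representatives must have chips at $w_0$ and at $w_g$ respectively. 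This yields a finite list of combinatorial types, each with a few continuous parameters (the chip positions), reflecting that a tropical pencil moves in a one-parameter family.

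Granting such a normal form, the remaining steps are computational. From the normal form of $D$, compute the $w_0$-reduced representative of $K_{\Gamma_g}-2D$ loop by loop; since $\deg K_{\Gamma_g}=2g-2$, every chip position produced along the way is an integer-linear combination of $\ell_1,m_1,\dots,\ell_g,m_g$ (and of the chip positions of $D$) with coefficient-sum at most $2g-2$, taken modulo the relevant loop length. For $K_{\Gamma_g}-2D$ to be effective, this reduction must terminate without any chip going negative; tracing through the algorithm, that can occur only if one of the reduction steps ``jams'', which in turn forces some such coordinate to vanish --- or to land on a vertex --- modulo $\ell_i+m_i$, i.e. a relation $a\ell_i=bm_i$ with $a,b$ positive integers and $a+b\le 2g-2$ (or an analogous relation spanning a few consecutive loops). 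This is exactly what the genericity hypothesis on the edge lengths of $\Gamma_g$ rules out, producing the contradiction.

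The hard part is Step~1. ``Positive rank'' is a condition at every one of the infinitely many points of the metric graph, not just at the finitely many vertices, so a crude chip count will not do and one must genuinely use the reduced-divisor combinatorics of \cite{CDPR}; the delicate point is to get a normal form restrictive enough that the final computation is a bounded case check, yet general enough to cover \emph{every} positive-rank divisor, including the higher-rank ones (which also satisfy $r(D)\ge 1$ when $g$ is large). A secondary difficulty is the bookkeeping in Step~2: reducing on one loop shifts chips on neighbouring loops, so the positions that appear are telescoping sums $\sum_j(\ell_j\pm m_j)$, and one must check that no accidental cancellation occurs --- but that is precisely the sort of coincidence the genericity condition on $\Gamma_g$ is designed to forbid.
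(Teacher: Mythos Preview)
Your outline takes a genuinely different route from the paper's proof, and as written it has a real gap at the decisive step.

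The paper argues by \emph{induction on $g$}, with base cases $g\le 3$ following from Brill--Noether generality of $\Gamma_g$. Given a hypothetical counterexample $(D,E)$ with $2D+E\sim K_{\Gamma_g}$, the paper does not try to reduce $K_{\Gamma_g}-2D$ directly; instead it studies the piecewise linear function $\psi$ with $\mathrm{div}(\psi)=2D+E-K_{\Gamma_g}$ and controls the incoming slopes $\mathrm{ord}_{v_k}(\psi_k)$ at the vertices. The bound $2(k-g)\le \mathrm{ord}_{v_k}(\psi_k)<0$ already uses the inductive hypothesis on $\Gamma_k$. Genericity then forces every punctured loop $\gamma_k$ with $k<g$ to carry a chip of $D$ or of $E$, and at non-lingering type-$(D)$ loops it forces the exact relation $2p_k+\mathrm{ord}_{v_k}(\psi_k)=0$. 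With this in hand, the paper \emph{excises loops}: if the lattice path of $D$ lingers at step $k$, remove $\gamma_k$ and glue $v_{k-1}$ to $v_k$ to obtain a counterexample on $\Gamma_{g-1}$; if not, either a type-$(E)$ loop sits next to a type-$(D,E)$ loop (remove both, induct on $\Gamma_{g-2}$), or a short numerical count using $2p_k=-\mathrm{ord}_{v_k}(\psi_k)$ finishes things off. The two ideas you do not have are the slope analysis of $\psi$ and the inductive loop-removal.

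Your direct-reduction strategy stalls at the sentence ``tracing through the algorithm, that can occur only if one of the reduction steps `jams', which in turn forces \dots\ a relation $a\ell_i=bm_i$ with $a+b\le 2g-2$.'' That assertion \emph{is} the theorem, and nothing in your outline supports it. Two concrete obstructions: first, on \emph{lingering} loops the chip of $D$ sits at an unconstrained real position, not an integer combination of $\ell_i,m_i$; this free parameter feeds into $K_{\Gamma_g}-2D$ and you give no mechanism to eliminate it (the paper disposes of lingering steps precisely by deleting those loops and inducting). Second, even on non-lingering loops, your bound ``coefficient-sum at most $2g-2$'' is asserted from $\deg K_{\Gamma_g}=2g-2$ without argument; in the paper the analogous bound comes from $|\mathrm{ord}_{v_k}(\psi_k)|\le 2(g-k)$, whose proof invokes the inductive hypothesis. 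Your parenthetical ``or an analogous relation spanning a few consecutive loops'' is also worrying, since the genericity hypothesis constrains only the individual ratios $\ell_i/m_i$ and says nothing about multi-loop relations. Without some substitute for $\psi$ and the induction, it is not clear the direct computation can be closed.
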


\begin{figure}[!h]
\centering
\includegraphics[width=1\textwidth]{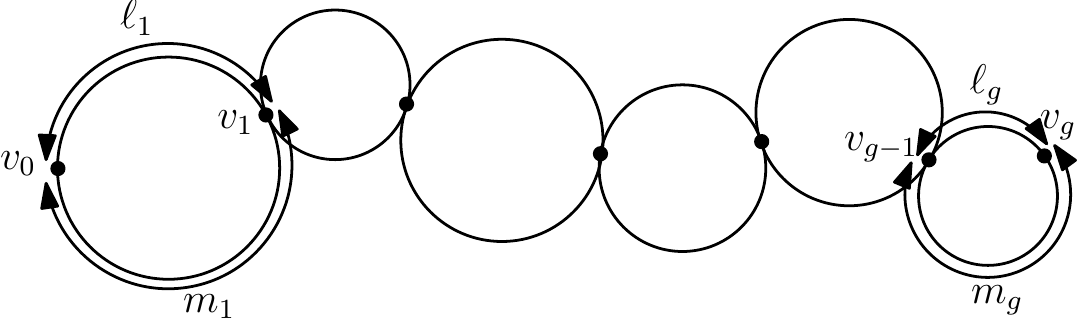}
\caption{The graph $\Gamma_g$ from \cite{CDPR}}	
\label{TheGraph}
\end{figure}

We show in Proposition \ref{Smoothness} that the above theorem implies the rank one case of the Gieseker-Petri Theorem.  In particular, we interpret the smoothness of $\mathcal{G}^1_d (X)$ at a basepoint-free pencil $W \subset H^0 (X,L)$ as a vanishing condition on $H^0 (X, K_X-2L)$.  We note that, for higher-rank linear series, the corresponding vanishing condition concerns a certain Koszul cohomology group rather than a space of global sections.  It would be interesting to know whether it's possible to detect the vanishing of Koszul cohomology groups using tropical techniques.  Such a theory, if developed, could potentially be used not only to provide tropical proofs of known theorems, such as the higher-rank cases of the Gieseker-Petri Theorem or Green's Conjecture for the general curve (see \cite{Voisin1} and \cite{Voisin2}), but also to shed light on open questions like the Maximal Rank Conjecture.

Our result, together with that of \cite{CDPR}, provides strong evidence that the graph $\Gamma_g$ is Gieseker-Petri general in the sense that $\mathcal{G}^r_d (X)$ is smooth for any curve $X$ that specializes in a regular family to a curve with dual graph $\Gamma_g$.  We mention one other piece of evidence in support of this.  In the case that $\rho = 0$, the variety $\mathcal{G}^r_d (X)$ is zero-dimensional, and the Gieseker-Petri theorem simply says that it is reduced.  This latter fact follows from \cite{CDPR}, where it is shown that the graph $\Gamma_g$ admits precisely $\lambda$ distinct divisors of degree $d$ and rank $r$, where $\lambda$ is the $r$-dimensional Catalan number
$$ \lambda := g! \prod_{k=0}^r \frac{k!}{(g-d+r+k)!} .$$

This paper is broken into three sections.  In the next section, we discuss the basic theory of linear systems on metric graphs.  In the third and final section, we use this theory to prove Theorem \ref{mainthm}.

\textbf{Acknowledgements:}
We would like to thank Eric Katz for reading an early version of this paper.

\section{Preliminaries}

This section contains a short outline of the facts we will need concerning divisors on metric graphs.  The full theory is developed in \cite{BakerNorine} and \cite{Baker}, which we encourage the reader to consult for more details.

\subsection{Divisors and Equivalence}

Given a metric graph $\Gamma$, we define the group $Div ( \Gamma )$ of divisors on $\Gamma$ to be the free abelian group on the points of $\Gamma$.  Given a divisor $D = \sum a_i p_i \in Div ( \Gamma )$, we define the \textit{degree} of $D$ to be the sum $\sum a_i$, and we say that $D$ is \textit{effective} if all of the coefficients $a_i$ are nonnegative.

In the tropical world, the role of meromorphic functions on an algebraic curve is played by piecewise linear functions on a metric graph.  More precisely, given a finite subdivision of $\Gamma$ and a continuous function $\psi$ on $\Gamma$ whose restriction to each edge of the subdivision is given by a linear function with integer slope, we define $ord_p ( \psi )$ to be the sum of the incoming slopes of $\psi$ along edges containing the point $p \in \Gamma$.  A \textit{principal} divisor on $\Gamma$ is then any divisor of the form
$$ div ( \psi ) := \sum_{p \in \Gamma} ord_p ( \psi ) p $$
for some piecewise linear function $\psi$.  In analogy with the case of algebraic curves, we define the Picard group $Pic ( \Gamma )$ to be the quotient of $Div ( \Gamma )$ by the subgroup of principal divisors.  We say that two divisors $D$ and $D'$ are equivalent, and write $D \sim D'$, if $D-D'$ is a principal divisor.

It is standard practice in combinatorics to refer to divisors on $\Gamma$ as \textit{chip configurations}.  In this language, a divisor $D = \sum a_i p_i$ is represented by a stack of $a_i$ chips at each point $p_i$ of the graph.  We will naturally turn to this language in our proof of Theorem \ref{mainthm}.

\subsection{Ranks of Divisors and Baker's Specialization Lemma}

Given a divisor $D$ on $\Gamma$, we say that $D$ has \textit{rank} $r$ if $r$ is the greatest integer such that $D-E$ is equivalent to an effective divisor for every effective divisor $E$ of degree $r$.  Throughout, we will say that a divisor \textit{moves} if it has positive rank.

Perhaps the most important property of divisors on metric graphs is their relation to divisors on algebraic curves.  Let $R$ be a DVR with field of fractions $K$ and residue field $k$, and let $X$ be a smooth projective curve over $K$.  A strongly semistable regular model of $X$ is a regular scheme $\mathcal{X}$ over $\Spec R$ whose general fiber is $X$ and whose special fiber is a reduced union of geometrically irreducible smooth curves meeting in nodes defined over $k$.  Let $\Gamma$ denote the metric graph corresponding to the dual graph of $X_k$, where every edge is assigned length 1.

Each point of $X(K)$ specializes to a smooth point of the special fiber, and hence is associated to a well-defined vertex of $\Gamma$.  Note that, if every component of the central fiber is rational, then the degree of the relative dualizing sheaf on each component is two less than the number of nodes, and hence the canonical divisor on $X$ specializes to
$$ K_{\Gamma} := \sum_{v \in \Gamma} (deg(v)-2) v .$$

If $K'$ is a finite extension of $K$, the variety $\mathcal{X} \times_K K'$ may not be a strongly semistable regular model of $X \times_K K'$, but this issue may be resolved by blowing up the singularities of the central fiber.  The dual graph of the special fiber of this new model is isomorphic to $\Gamma$, but with edges subdivided into $e$ segments, where $e$ is the ramification index of $K'$ over $K$.  Hence there is a well-defined map from the $\bar{K}$-points of $X$ to the points of $\Gamma$.  Extending this linearly defines a map on divisors $\tau_* : Div(X_{\bar{K}}) \to Div ( \Gamma )$.  Moreover, this map respects linear equivalence, and hence defines a map $\tau_* : Pic (X_{\bar{K}}) \to Pic ( \Gamma )$.  The key point of this construction is Baker's Specialization Lemma, which says that ranks of divisors are well-behaved under this map.

\begin{lemma}
\cite{Baker}
\label{BakerSpecialization}
Let $D$ be a divisor on $X_{\bar{K}}$.  Then $r( \tau_* (D)) \geq r(D)$.
\end{lemma}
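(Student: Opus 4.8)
The plan is to unwind the definition of rank on $\Gamma$, reduce to effective divisors supported at rational points of $\Gamma$, and then lift back to $X_{\bar K}$. Set $r := r(D)$ and assume $r \geq 0$ (otherwise there is nothing to prove). By the definition of rank on a metric graph, it suffices to show that $\tau_*(D) - E$ is linearly equivalent to an effective divisor on $\Gamma$ for every effective divisor $E$ of degree $r$; this gives $r(\tau_*(D)) \geq r = r(D)$.

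The formal core of the argument applies whenever $E$ lies in the image of $\tau_*$. The map $\tau_*$ is a homomorphism of divisor groups which sends effective divisors to effective divisors (points map to points) and respects linear equivalence. So if $E = \tau_*(\tilde E)$ for some effective divisor $\tilde E$ of degree $r$ on $X_{\bar K}$, then $r(D) \geq r$ produces an effective divisor $D'$ on $X_{\bar K}$ with $D - \tilde E \sim D'$, and applying $\tau_*$ gives $\tau_*(D) - E = \tau_*(D - \tilde E) \sim \tau_*(D') \geq 0$, as desired.

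Next I would arrange for $E$ to be liftable. Working after a finite extension $K'/K$, the base change of $\mathcal{X}$ can be modified to a strongly semistable regular model whose dual graph is $\Gamma$ with subdivided edges, as recalled above; under the resulting identification, every point of $\Gamma$ at rational distance from the vertices is $\tau_*(Q)$ for some $Q \in X(\bar K)$. Choosing $K'$ large enough to accommodate the entire support of $E$ at once, one assembles $\tilde E$ with $\tau_*(\tilde E) = E$. Hence the previous paragraph settles the claim whenever $E$ is supported at rational points of $\Gamma$.

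The remaining step -- passing from rational $E$ to arbitrary $E$ -- is the one I expect to be the main obstacle, being the only genuinely non-formal input. Effective divisors of degree $r$ with rational support are dense among all effective divisors of degree $r$ on $\Gamma$, so it is enough to know that the condition ``$\tau_*(D) - E$ is linearly equivalent to an effective divisor'' is closed as $E$ varies continuously. This amounts to upper semicontinuity of the rank function, which can be extracted from the theory of $v$-reduced divisors developed in \cite{BakerNorine}: a divisor of nonnegative degree is equivalent to an effective divisor precisely when its $v$-reduced representative is effective, and passing to the $v$-reduced representative behaves well under continuous variation. Granting this, the rational $E$ suffice and the lemma follows. (Equivalently, one may induct on $r$, using the criterion that $r(\tau_*(D)) \geq r$ as soon as $r(\tau_*(D) - p) \geq r - 1$ for all $p \in \Gamma$, and handling the non-rational $p$ by the same density and semicontinuity argument.)
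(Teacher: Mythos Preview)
The paper does not supply a proof of this lemma at all: it is stated with a citation to \cite{Baker} and immediately followed by Proposition~\ref{Smoothness}. There is therefore no argument in the paper to compare your attempt against.

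Taken on its own, your sketch follows the standard outline of Baker's original proof: reduce to test divisors $E$ that lift to $X_{\bar K}$, use the algebraic rank hypothesis there, and push forward via $\tau_*$. Your observation that rational points of $\Gamma$ lie in the image of $\tau_*$ (after a suitable finite extension and the attendant subdivision of the dual graph) is correct, and this is exactly how one arranges the lifting. The step you yourself flag as the obstacle---extending from rational $E$ to arbitrary $E$ by a density/semicontinuity argument---is indeed where the content lies, and your justification (``passing to the $v$-reduced representative behaves well under continuous variation'') is not quite a proof as stated. In Baker's treatment this is handled either by first establishing the inequality for finite (non-metric) graphs, where every vertex lifts and no limiting argument is needed, and then invoking the comparison of ranks between a metric graph and its rational models; or, equivalently, by using that the complete linear system $\vert \tau_*(D) \vert$ is a compact polyhedral set, which makes the relevant effectivity condition closed in $E$. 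Either of these fills the gap you identified, but as written your last paragraph is a plausible sketch rather than a complete argument.
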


\begin{proposition}
\label{Smoothness}
Let $\mathcal{X}$ be a strongly semistable regular model with general fiber $X$, and suppose that the central fiber has dual graph $\Gamma_g$.  Then Theorem \ref{mainthm} implies that $\mathcal{G}^1_d (X)$ is smooth.
\end{proposition}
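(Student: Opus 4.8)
The plan is to translate the non-smoothness of $\mathcal{G}^1_d(X)$ into the existence of a special divisor on $X$, and then push this datum down to $\Gamma_g$ by means of Lemma \ref{BakerSpecialization}, at which point Theorem \ref{mainthm} forbids it. So suppose $\mathcal{G}^1_d(X)$ is singular, say at a point $(L,V)$ consisting of a line bundle $L$ of degree $d$ together with a pencil $V \subseteq H^0(X,L)$. Standard deformation theory shows that $\mathcal{G}^1_d(X)$ is smooth of the expected dimension $\rho$ at $(L,V)$ if and only if the Petri map
\[
\mu_0 : V \otimes H^0(X, K_X - L) \longrightarrow H^0(X, K_X),
\]
given by multiplication of sections, is injective; so in our situation $\ker \mu_0 \neq 0$.

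Next I would extract from this a moving line bundle $L_0$ on $X$ with $K_X - 2L_0$ effective, by way of the base-point-free pencil trick. Let $F$ be the base divisor of the pencil $V$ and set $L_0 := L(-F)$, so that $V$ induces a base-point-free pencil $V_0 \subseteq H^0(X, L_0)$; in particular $h^0(X, L_0) \geq 2$, so $L_0$ has positive rank. Tensoring the base-point-free pencil resolution $0 \to L_0^{-1} \to V_0 \otimes \mathcal{O}_X \to L_0 \to 0$ by $K_X - L$ and passing to cohomology identifies $\ker\mu_0$ with $H^0(X, K_X - 2L_0 - F)$ — up to the injection obtained by multiplying by the section of $\mathcal{O}_X(F)$. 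Hence $\ker\mu_0 \neq 0$ forces $H^0(X, K_X - 2L_0 - F) \neq 0$, and since $F$ is effective it follows that $K_X - 2L_0$ is linearly equivalent to an effective divisor on $X$. Summarizing: a singular point of $\mathcal{G}^1_d(X)$ produces a line bundle $L_0$ on $X$ of positive rank such that $K_X - 2L_0$ is linearly equivalent to an effective divisor.

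It remains to specialize. Put $D := \tau_*(L_0) \in Pic(\Gamma_g)$. By Lemma \ref{BakerSpecialization}, $r(D) \geq r(L_0) \geq 1$, so $D$ is a positive-rank divisor on $\Gamma_g$. Since $\tau_*$ is a homomorphism of Picard groups carrying effective divisors to effective divisors, and since $\tau_*(K_X) = K_{\Gamma_g}$ (the components of the central fiber being rational, as noted in the previous section), we obtain
\[
K_{\Gamma_g} - 2D = \tau_*(K_X - 2L_0),
\]
and the right-hand side is linearly equivalent to an effective divisor on $\Gamma_g$ because $K_X - 2L_0$ is linearly equivalent to an effective divisor on $X$. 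Thus $\Gamma_g$ admits a positive-rank divisor $D$ with $K_{\Gamma_g} - 2D$ linearly equivalent to an effective divisor, contradicting Theorem \ref{mainthm}. Therefore $\mathcal{G}^1_d(X)$ is smooth.

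I expect the main obstacle to be the middle step: making precise the deformation-theoretic dictionary between smoothness of $\mathcal{G}^1_d(X)$ at $(L,V)$ and injectivity of $\mu_0$, and carrying out the base-locus bookkeeping that reduces $\ker\mu_0 \neq 0$ to the clean statement that $K_X - 2L_0$ is effective for a moving line bundle $L_0$. Once the statement is in this shape, the descent to $\Gamma_g$ is essentially formal, relying only on the additivity of $\tau_*$ on divisor classes, its compatibility with effectivity and with the canonical class, and the rank inequality of Lemma \ref{BakerSpecialization}.
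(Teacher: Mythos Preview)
Your proof is correct and follows essentially the same route as the paper's: both reduce smoothness at $(L,V)$ to injectivity of the Petri map $\mu_0$ (the paper cites \cite[Prop.~4.1]{ACGH}), strip off the base locus via the base-point-free pencil trick to obtain a moving $L_0$ with $K_X-2L_0$ effective, and then specialize via $\tau_*$ together with $\tau_*K_X=K_{\Gamma_g}$ and Lemma~\ref{BakerSpecialization} to contradict Theorem~\ref{mainthm}. The only cosmetic difference is that you frame it as a contradiction and invoke effectivity-preservation of $\tau_*$ directly, whereas the paper phrases the last step as the rank inequality $r(K_{\Gamma_g}-2\tau_*L')\geq r(K_X-2L')$.
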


\begin{proof}
Let $L$ be a line bundle on $X$ and $W \subset H^0 (X,L)$ be a 2-dimensional vector space.  By Proposition 4.1 in \cite{ACGH}, it suffices to show that the cup-product map
$$ \mu_W : W \otimes H^0 (X,K_X-L) \to H^0 (X,K_X) $$
is injective.  If $B$ is the base locus of $W$, then by the basepoint-free pencil trick (see p. 126 in \cite{ACGH}), we see that $\ker ( \mu_W ) \cong H^0 (X, K_X-2L+B)$.  Letting $L' = L-B$, it therefore suffices to show that $K_X-2L'$ is not effective.  By Lemma \ref{BakerSpecialization}, we therefore have
$$ r( \tau_* L' ) \geq r(L') \geq 1, $$
$$ r( K_{\Gamma_g} - 2 \tau_* L' ) = r( \tau_* (K_X - 2L')) \geq r(K_X - 2L') .$$
By Theorem \ref{mainthm}, however, since $\tau_* L'$ has positive rank, $K_{\Gamma_g} - 2 \tau^* L'$ is not linearly equivalent to an effective divisor, and hence has negative rank.  The result follows.
\end{proof}

\subsection{Reduced Divisors and Lingering Lattice Paths}

A useful tool for working with divisors on metric graphs is the notion of reduced divisors.  For a fixed point $p \in \Gamma$, we say that an effective divisor $D$ is $p$-\textit{reduced} if the set of distances from $p$ to chips of $D$ is lexicographically minimal among all effective divisors equivalent to $D$.  By definition, every effective divisor is equivalent to a unique $p$-reduced divisor.

It is straightforward to characterize the $v_n$-reduced divisors on $\Gamma_g$, and indeed this is done in \cite{CDPR}.  Let $\bar{\gamma_j}$ denote the $j^{th}$ loop of $\Gamma_g$.  For each $j>n$, let $\gamma_j = \bar{\gamma_j} \backslash \{ v_{j-1} \}$ be the corresponding left-punctured loop, and for each $j \leq n$, let $\gamma_j' = \bar{\gamma_j} \backslash \{ v_{j} \}$ be the corresponding right-punctured loop, as pictured below.  An effective divisor $D$ is $v_n$-reduced if and only if each such cell contains at most one chip of $D$.

\begin{figure}[!h]
\centering
\includegraphics[width=1\textwidth]{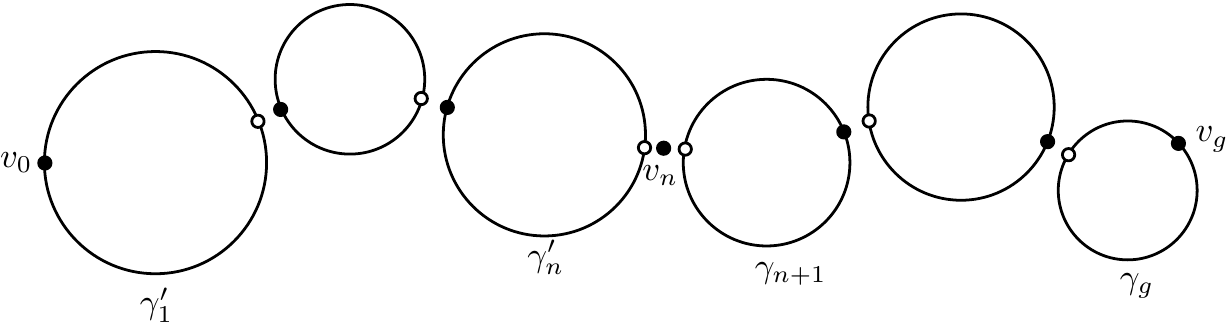}
\caption{Cell decomposition of the graph $\Gamma_g$ (from \cite{CDPR})}	
\label{fig:CellDecomposition}
\end{figure}

One of the main results of \cite{CDPR} is a characterization of those divisors on $\Gamma_g$ that have rank $r$.  Every effective divisor on $\Gamma_g$ is equivalent to a $v_0$-reduced divisor, and every such a divisor consists of $d_0$ chips at the vertex $v_0$, together with at most one chip on every other loop.  We may therefore associate to each equivalence class the data $(d_0 , x_1 , x_2 , \ldots x_g )$, where $x_i \in \mathbb{R} / (\ell_i + m_i ) \mathbb{Z}$ is the distance from the chip on the $i^{th}$ loop to $v_{i-1}$ in the counterclockwise direction.  (If there is no chip on the $i^{th}$ loop, we write $x_i = 0$.)  The associated lingering lattice path is defined as follows.

\begin{definition}
\cite{CDPR}
Let $D$ be the $v_0$-reduced divisor of degree $d$ corresponding to $(d_0 , x_1 , \ldots , x_g )$.  Then the associated \textit{lingering lattice path} $P$ in $\mathbb{Z}^r$ starts at $(d_0 , d_0 -1, \ldots , d_0 -r+1)$ with steps given by
\begin{displaymath}
p_i - p_{i-1} = \left\{ \begin{array}{ll}
(-1,-1, \ldots , -1) & \textrm{if $x_i = 0$}\\
e_j & \textrm{if $x_i = (p_{i-1}(j) +1)m$ mod $\ell_i + m_i$} \\
  & \textrm{and both $p_{i-1}$ and $p_{i-1} + e_j$ are in $\mathcal{C}$}\\
0 & \textrm{otherwise}
\end{array} \right\}
\end{displaymath}
where $e_0 , \ldots e_{r-1}$ are the standard basis vectors in $\mathbb{Z}^r$.
\end{definition}

They then prove:

\begin{proposition}
\cite{CDPR}
A divisor $D$ on $\Gamma_g$ has rank at least $r$ if and only if the associated lingering lattice path lies entirely in the open Weyl chamber
$$ \{ y \in \mathbb{Z}^r \vert y_0 > y_1 > \cdots > y_{r-1} > 0 \}. $$
\end{proposition}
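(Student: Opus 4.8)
The plan is to translate the definition of rank directly into a combinatorial condition on the data $(d_0, x_1, \ldots, x_g)$, and then to show that this condition is exactly recorded by the lingering lattice path. Recall that $D$ has rank at least $r$ precisely when $D - E$ is equivalent to an effective divisor for every effective divisor $E$ of degree $r$. The fundamental tool is the theory of reduced divisors: $D - E$ is equivalent to an effective divisor if and only if its $v_g$-reduced representative is effective, and effectiveness of a $p$-reduced divisor can be tested by Dhar's burning algorithm \cite{BakerNorine}. The first step is therefore a reduction of the universally quantified condition ``for all $E$'' to a finite, explicit family of test divisors. Using the structure of $\Gamma_g$ -- at most one chip per loop together with a pile at a chosen vertex -- I would argue that it suffices to check those $E$ obtained by pushing all $r$ chips to their locally worst positions on a subset of the loops; any other $E$ is dominated by one of these, in the sense that if the reduction succeeds for the worst position it succeeds for all nearby ones.

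The second step is to establish the recursion defining the lattice path by a local analysis, loop by loop, from $v_0$ toward $v_g$. Here the position $p_i \in \mathbb{Z}^r$ should be interpreted as the ``profile'' of the best effective representatives of the twists of $D$ supported on the first $i$ loops: the coordinate $p_i(j)$ measures how many chips can be accumulated at level $j$ after processing loops $1, \ldots, i$. Crossing the $i$-th loop has exactly three possible effects on this profile, matching the three cases of the definition: when $x_i = 0$ the loop contributes no chip and every level drops by one, giving the step $(-1, \ldots, -1)$; when the chip on the $i$-th loop sits at the distinguished distance $(p_{i-1}(j)+1)m_i \bmod (\ell_i + m_i)$ it can be transferred to raise level $j$, giving the step $e_j$; and otherwise the profile is unchanged, giving a lingering step. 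The genericity hypothesis on the ratios $\ell_i / m_i$ enters precisely here: it guarantees that for each loop at most one value of $j$ can trigger an $e_j$ step, so that the recursion is well-defined and no coincidental simultaneous moves occur.

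Having identified the path with the evolving profile, the final step is to match membership in the open Weyl chamber with the rank condition. For necessity, I would argue the contrapositive: if the path first touches a wall $y_{j-1} = y_j$, or the wall $y_{r-1} = 0$, at some loop, then the corresponding profile has run out of room at level $j$, and one reads off an explicit effective divisor $E$ of degree $r$ for which the $v_g$-reduced representative of $D - E$ acquires a negative coefficient; this witnesses $r(D) < r$. For sufficiency, one assumes the path stays strictly inside the chamber and shows that every effective $E$ of degree $r$ can be subtracted: the $r$ strict inequalities $p_i(0) > p_i(1) > \cdots > p_i(r-1) > 0$ provide exactly enough independent ``room'' to absorb the $r$ chips of $E$ one at a time as they are swept across the graph, keeping the running reduced divisor effective.

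I expect the sufficiency direction to be the main obstacle, for two related reasons. First, it is the universally quantified half of the rank definition, so it must handle every degree-$r$ effective divisor simultaneously rather than producing a single witness; making the reduction to worst-case test divisors airtight -- and proving that the worst case is genuinely captured by the single lattice-path invariant -- is the delicate point. Second, one must control the interaction between the pile of chips being moved and the chips already present on the loops as the base point sweeps from $v_0$ to $v_g$; verifying that the strict separation of the coordinates is preserved and remains sufficient at every intermediate stage is where the careful bookkeeping lives, and it is here that the genericity of the edge lengths must be invoked repeatedly to rule out degenerate coincidences.
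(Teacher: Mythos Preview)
This proposition is not proved in the present paper at all: it is quoted verbatim from \cite{CDPR} and stated without proof, introduced by the phrase ``They then prove.'' There is therefore no proof in this paper to compare your proposal against. Your sketch is a reasonable high-level outline of the argument that actually appears in \cite{CDPR}, but if your goal is to supply a proof for inclusion here, you should simply cite \cite{CDPR} as the authors do; and if your goal is to reconstruct the CDPR proof, you should consult that paper directly, since several of your steps (notably the reduction to a finite family of ``worst-case'' test divisors and the precise bookkeeping in the sufficiency direction) are stated only as intentions rather than arguments.
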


In this paper, we are only interested in the case where $r=1$, in which case the lingering lattice path is simply a sequence of integers $p_i$.  The proposition above says that a given divisor $D$ moves if and only if $p_i > 0$ for all $i$.  It is shown in Proposition 3.10 of \cite{CDPR} that the $v_n$-reduced divisor equivalent to $D$ has precisely $p_n$ chips at $v_n$.

\section{Combinatorial Arguments}


In this section, we prove Theorem \ref{mainthm}.  Our approach is via induction on $g$.  The base cases $g = 1,2,3$ follow from [CDPR], as $\Gamma_g$ is Brill-Noether general.  Throughout, we will suppose that there exists a pair $(D,E)$ of divisors on $\Gamma_g$ such that $D$ moves, $E$ is effective, and $2D+E \sim K_{\Gamma_g}$.  We will furthermore write $(p_0, p_1, p_2, \ldots , p_g)$ for the (rank one) lingering lattice path associated to $D$.

Without loss of generality, we may assume that $D$ and $E$ are both $v_0$-reduced.  Indeed, letting $D'$ and $E'$ be the $v_0$-reduced divisors equivalent to $D$ and $E$, we see that $2D' + E' \sim 2D + E \sim K_{\Gamma_g}$.  Moreover, we may assume that $D$ is not supported at any of the vertices $v_k$ for $k \geq 1$.  To see this, let $v'_k$ be the point on $\bar{\gamma_k}$ a distance of $\frac{1}{2} (\ell_k + m_k)$ from $v_k$. Setting $D' = D + v'_k - v_k$, we see that $2D' \sim 2D$, and since the $k^\mathrm{th}$ step of the lingering lattice path associated to $D$ is lingering, $D'$ moves if and only if $D$ does.

By assumption, there exists a continuous piecewise linear function $\psi$ on $\Gamma_g$ such that $2D+E-K_{\Gamma_g} = div(\psi)$.  For each $k$, let $\psi_k$ be the restriction of $\psi$ to $\bar{\gamma_k}$.  The function $\psi$ plays a pivotal role in the arguments that follow, and our first task is to determine a few of its properties.

\subsection{Properties of the Function $\psi$}

We will use the following lemma and its corollaries.

\begin{lemma}
\label{lemma3}
Consider the subgraph $ \Gamma_{g-k}$ obtained by removing the right-punctured loops $\gamma_1' , \ldots , \gamma_k'$. For each $k \geq 1$, we have deg$(2D + E) \vert_{\Gamma_{g-k}} = ord_{v_k} (\psi_k) + 2(g-k)$.
\end{lemma}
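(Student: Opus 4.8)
The plan is to compute the degree of $2D+E$ restricted to the subgraph $\Gamma_{g-k}$ (the last $g-k$ loops, together with the $k$-th vertex $v_k$ as its left endpoint) by carefully bookkeeping which chips of the principal divisor $div(\psi) = 2D+E-K_{\Gamma_g}$ lie in this subgraph. Since $2D+E = K_{\Gamma_g} + div(\psi)$, we have
$$ \deg\big((2D+E)\vert_{\Gamma_{g-k}}\big) = \deg\big(K_{\Gamma_g}\vert_{\Gamma_{g-k}}\big) + \deg\big(div(\psi)\vert_{\Gamma_{g-k}}\big). $$
The first term is easy: every vertex $v_i$ with $k \le i \le g-1$ interior to $\Gamma_{g-k}$ has degree $4$ in $\Gamma_g$ (two edges to the left loop, two to the right), contributing $4-2 = 2$ each; the right endpoint $v_g$ has degree $2$, contributing $0$; and the left endpoint $v_k$ contributes $\deg(v_k) - 2 = 2$ as well, since $v_k$ still has degree $4$ in $\Gamma_g$. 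But $v_k$ is a boundary vertex of $\Gamma_{g-k}$, so only the coefficient coming from the two edges inside $\Gamma_{g-k}$ should be counted — this subtlety is exactly what produces the $ord_{v_k}(\psi_k)$ term, and pinning it down is where the real work lies.

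The key step is to analyze $div(\psi)\vert_{\Gamma_{g-k}}$ via the topological/combinatorial fact that the total order of a principal divisor over any connected subgraph equals the net outflow of slope across the boundary. Concretely, for a continuous piecewise-linear function $\psi$ and a subgraph $\Gamma'$ whose boundary in $\Gamma_g$ consists of the single point $v_k$, the sum $\sum_{p \in \Gamma'} ord_p(\psi)$ counts, at the boundary vertex $v_k$, only the incoming slopes along edges of $\Gamma'$ — i.e. it equals $ord_{v_k}(\psi_k)$ where $\psi_k$ is the restriction to $\bar\gamma_k$, reinterpreted appropriately, or more precisely the sum of incoming slopes at $v_k$ along the two edges of $\bar\gamma_{k+1},\dots$ pointing into $\Gamma_{g-k}$. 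I would set this up by writing $\sum_{p\in\Gamma_g} ord_p(\psi) = 0$, splitting the sum over $\Gamma_{g-k}$ and its complement, and observing that the only point contributing to both local computations is $v_k$; the incoming slopes at $v_k$ from the left (inside $\Gamma_1 \cup \cdots \cup \gamma_k'$) and from the right (inside $\Gamma_{g-k}$) together sum to $ord_{v_k}(\psi)=0$ only after adding the $K_{\Gamma_g}$ correction, so the restricted degree picks up exactly the right-pointing contribution. Combining with the $K_{\Gamma_g}$ computation above — where $v_k$ as a boundary vertex of $\Gamma_{g-k}$ should be assigned $\deg_{\Gamma_{g-k}}(v_k) - 2 = 2 - 2 = 0$ rather than $2$, the missing $2$ being supplied by the two edges left out — yields $2(g-k-1) + 0 + 0$ from the interior and endpoints, plus a correction. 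I expect the arithmetic to reassemble as $ord_{v_k}(\psi_k) + 2(g-k)$ once the boundary terms are tallied consistently.

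The main obstacle will be getting the boundary bookkeeping at $v_k$ exactly right: one must be careful about whether $v_k$ is counted as a vertex of $\Gamma_{g-k}$ (with its reduced degree $2$) or as an external attachment point, about the sign conventions for incoming versus outgoing slopes, and about whether $ord_{v_k}(\psi_k)$ refers to slopes along $\bar\gamma_k$ (the loop being removed) or along $\bar\gamma_{k+1}$ (the first loop retained). Given that $\psi$ is continuous at $v_k$ and the full divisor $div(\psi)$ has degree zero, these conventions are forced, but writing the identity cleanly — ideally by induction on $k$, peeling off one loop at a time and tracking how $ord_{v_{k-1}}(\psi_{k-1})$ relates to $ord_{v_k}(\psi_k)$ — is the cleanest route. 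A one-loop computation (relating $ord_{v_{k-1}}(\psi)$, the slopes entering $\bar\gamma_k$, and the chips of $2D+E$ on $\bar\gamma_k$, using that $\bar\gamma_k$ contributes $K_{\Gamma_g}$-degree $2$ at its two endpoints) gives the inductive step, and the base case $k=g$ or $k=1$ is immediate.
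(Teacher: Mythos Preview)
Your approach is correct and is essentially the same as the paper's: both arguments use that the restriction of $\psi$ to a connected subgraph is again piecewise linear, so its divisor has degree zero, and the only discrepancy with $div(\psi)$ restricted as a formal sum occurs at the boundary vertex $v_k$. The paper carries this out on the complementary subgraph $\Gamma_k := (\Gamma_g \setminus \Gamma_{g-k}) \cup \{v_k\}$ rather than on $\Gamma_{g-k}$ itself, which makes the bookkeeping a little cleaner, but the content is identical.

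Two clarifications that resolve your stated uncertainties. First, $ord_{v_k}(\psi_k)$ in the paper's notation is the sum of incoming slopes at $v_k$ along the two edges of $\bar\gamma_k$ (the $k$th loop, i.e.\ the last loop \emph{removed}), not along $\bar\gamma_{k+1}$. Second, you should not conflate $K_{\Gamma_g}\vert_{\Gamma_{g-k}}$ with $K_{\Gamma_{g-k}}$: the former is simply the formal restriction, and since $K_{\Gamma_g} = \sum_{i=1}^{g-1} 2v_i$, its restriction to $\Gamma_{g-k}$ has degree exactly $2(g-k)$ (vertices $v_k,\ldots,v_{g-1}$), with no boundary adjustment needed. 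The boundary adjustment enters only in the $div(\psi)$ term, where $\sum_{p\in\Gamma_{g-k}} ord_p(\psi) = ord_{v_k}(\psi_k)$ by the degree-zero argument you outlined; adding these gives the result directly, with no induction required.
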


\begin{proof}
Let $D'$ and $E'$ be the restriction of $D$ and $E$ to $\Gamma \backslash \Gamma_{g-k}$, and let $\psi'$ be the restriction of $\psi$ to $\Gamma_k := ( \Gamma \backslash \Gamma_{g-k} ) \cup \{ v_k \}$.  For all $p \in \Gamma_k$, we have
\[ord_p (\psi') = \left\{ \begin{array}{ll} ord_{v_k} (\psi_k) & \mbox{if $p = v_k$};\\ ord_p (\psi) & \mbox{otherwise.} \end{array} \right.\]
Therefore, $div(\psi') = 2D' + E' + ord_{v_k} (\psi_k) v_k - K_{\Gamma_k}$.  We have the following equations:
\begin{eqnarray*}
deg \left( 2D' + E' - K_{\Gamma_k} \right) + ord_{v_k} (\psi_k) = 0\\
deg \left( 2D + E - K_{\Gamma_g} \right) = 0\\
deg \left(K_{\Gamma_g} \right) - deg \left(K_{\Gamma_k} \right) = 2(g-k)\\
\end{eqnarray*}
It follows that $deg(2D+E) - deg (2D'+E') = ord_{v_k} (\psi_k) + 2(g-k)$.  By definition, the left hand side is precisely $deg(2D + E) \vert_{\Gamma_{g-k}}$.
\end{proof}

As a first consequence, we obtain bounds on the incoming slopes of $\psi_k$ at each of the vertices.

\begin{corollary}
\label{corollary1}
For all $k<g$, we have $2(k-g) \leq ord_{v_k} (\psi_k) < 0$.
\end{corollary}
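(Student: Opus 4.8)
The plan is to extract both inequalities from Lemma~\ref{lemma3}, applied to the subgraph $\Gamma_{g-k}$. Recall that the lemma gives
$$ \deg(2D+E)\vert_{\Gamma_{g-k}} = ord_{v_k}(\psi_k) + 2(g-k), $$
so the two desired bounds are equivalent to
$$ 0 < \deg(2D+E)\vert_{\Gamma_{g-k}} \leq 2(g-k). $$
The right-hand inequality is the easy half. The restriction $\deg(2D+E)\vert_{\Gamma_{g-k}}$ counts chips of $2D+E$ lying in $\Gamma\setminus\Gamma_{g-k}$, i.e.\ in the first $k$ (right-punctured) loops $\gamma_1',\dots,\gamma_k'$ together with the vertex $v_0$. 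First I would bound $\deg D\vert_{\Gamma\setminus\Gamma_{g-k}}$: since $D$ is $v_0$-reduced and, by the reductions made at the start of the section, is not supported at any $v_j$ with $j\geq 1$, it has $d_0 = p_0$ chips at $v_0$ and at most one chip on each of the remaining $k-1$ cells among the first $k$ loops, but one must be careful — I should instead phrase the count in terms of the $v_k$-reduced representative, or directly bound via the lingering lattice path, which shows $D$ restricted to the first $k$ loops has degree $p_0$ plus the number of non-lingering up-steps among the first $k$ steps, which is $\leq p_0 + k - \#(\text{down-steps})$. Combining with a parallel bound on $E$ and the fact that $\deg(2D+E)=\deg K_{\Gamma_g}=2g-2$, one obtains $\deg(2D+E)\vert_{\Gamma_{g-k}} \leq 2(g-k)$. (A cleaner route: $\deg(2D+E)\vert_{\Gamma_{g-k}} \geq 2p_k + \deg E\vert_{\Gamma_{g-k}} \geq 2p_k > 0$ would give the \emph{lower} bound, and then the upper bound follows by applying the same lower bound to the complementary subgraph and subtracting from $2g-2$; I would develop whichever of these two symmetric arguments is cleanest.)

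For the strict lower bound $0 < \deg(2D+E)\vert_{\Gamma_{g-k}}$, the key input is that $D$ moves, so by the rank-one case of the Cools--Draisma--Payne--Robeva proposition the lingering lattice path satisfies $p_i > 0$ for all $i$; in particular $p_k \geq 1$. By Proposition~3.10 of \cite{CDPR}, the $v_k$-reduced divisor equivalent to $D$ has exactly $p_k$ chips at $v_k$. Now $v_k$ lies in the closure of $\Gamma\setminus\Gamma_{g-k}$ — in fact replacing $D$ by its $v_k$-reduced representative, all $p_k \geq 1$ of those chips sit at $v_k$, which is on the boundary between the two subgraphs; pushing them infinitesimally into the first $k$ loops (as was done in the reduction step via the points $v_k'$, or just noting the degree restriction to $\Gamma_{g-k}$ is $\leq 2g-2-2p_k < 2g-2$ on the complement) shows the restricted degree is positive. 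I would make this precise by working with the $v_k$-reduced model of $D$: it has $p_k$ chips at $v_k$ and the remainder distributed with at most one per cell, and since $2p_k + (\text{stuff in } \Gamma_{g-k}) \leq \deg(2D+E) = 2g-2$, the complementary count $\deg(2D+E)\vert_{\Gamma_{g-k}}$ is at least... — here I need the chips at $v_k$ to count toward $\Gamma\setminus\Gamma_{g-k}$, which holds with the chosen conventions for the restriction (or after the $v_k'$ perturbation already performed on $D$). Hence $\deg(2D+E)\vert_{\Gamma_{g-k}} \geq 2p_k \geq 2 > 0$.

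The main obstacle I anticipate is purely bookkeeping: pinning down exactly how the boundary vertices $v_k$ and $v_0$ are counted in $\deg(\,\cdot\,)\vert_{\Gamma_{g-k}}$ versus $\deg(\,\cdot\,)\vert_{\Gamma_k}$, and making sure the two subgraphs' degrees add to $\deg(2D+E) = 2g-2$ without double- or under-counting $v_k$. Once the convention is fixed consistently with Lemma~\ref{lemma3}, both inequalities should drop out: the lower bound from $p_k \geq 1$ together with Proposition~3.10 of \cite{CDPR}, and the upper bound by the same reasoning applied to the complementary chunk $\gamma_{k+1}',\dots$ — or equivalently by observing $ord_{v_k}(\psi_k) = \deg(2D+E)\vert_{\Gamma_{g-k}} - 2(g-k) \leq 2g-2 - 2(g-k) = 2k-2 < \dots$; I would double-check the arithmetic so that it lands on $ord_{v_k}(\psi_k) < 0$ exactly, which forces the restricted degree to be $< 2(g-k)$, i.e.\ $\geq 2(g-k) - (2g-2) \cdot(\text{something})$ — in any case the inequality $2(k-g) \leq ord_{v_k}(\psi_k)$ is just the nonnegativity of $\deg(2D+E)\vert_{\Gamma_{g-k}}$, which is immediate since $2D+E$ is effective.
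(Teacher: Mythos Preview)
Your treatment of the lower bound $2(k-g) \leq ord_{v_k}(\psi_k)$ is fine: as you eventually note, it is equivalent to $\deg(2D+E)\vert_{\Gamma_{g-k}} \geq 0$, which is immediate from effectivity of $2D+E$. But there is a genuine gap in your argument for the strict inequality $ord_{v_k}(\psi_k) < 0$. First, the bookkeeping is off: the equivalent degree bounds are $0 \leq \deg(2D+E)\vert_{\Gamma_{g-k}} < 2(g-k)$, with strictness at the \emph{upper} end, not $0 < \cdots \leq 2(g-k)$ as you write; and $\Gamma_{g-k}$ is the right-hand subgraph (loops $k+1,\ldots,g$), not the complement. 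More seriously, your proposed route via Proposition~3.10 of \cite{CDPR} cannot close the gap. Replacing $D$ by its $v_k$-reduced representative is a linear equivalence, and the restricted degree $\deg(\cdot)\vert_{\Gamma_{g-k}}$ is not invariant under linear equivalence; so the location of the $p_k$ chips of the $v_k$-reduced divisor says nothing directly about the fixed $v_0$-reduced $D$. One can extract from the lattice-path combinatorics that $\deg D\vert_{\Gamma\setminus\Gamma_{g-k}} \geq p_k$, hence $\deg(2D+E)\vert_{\Gamma_{g-k}} \leq 2g-2-2p_k$; but this is $<2(g-k)$ only when $p_k \geq k$, which need not hold.

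The paper's proof uses an idea you never invoke: the inductive hypothesis that Theorem~\ref{mainthm} holds for $\Gamma_k$ with $k<g$. From the proof of Lemma~\ref{lemma3} one has $2D' + E' + ord_{v_k}(\psi_k)\,v_k \sim K_{\Gamma_k}$, where $D',E'$ are the restrictions of $D,E$ to $\Gamma_k$. The lingering lattice path of $D'$ is $(p_0,\ldots,p_k)$, so $D'$ moves. If $ord_{v_k}(\psi_k) \geq 0$ then $E' + ord_{v_k}(\psi_k)\,v_k$ is effective, and the pair $(D', E'+ord_{v_k}(\psi_k)\,v_k)$ on $\Gamma_k$ contradicts the inductive hypothesis. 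This inductive step is the missing ingredient; pure chip-counting on $\Gamma_g$ does not suffice.
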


\begin{proof}
The lefthand inequality follows directly from Lemma \ref{lemma3}.  To see the righthand inequality, note that in the proof of Lemma \ref{lemma3}, $2D' + E' + ord_{v_k} (\psi_k)v_k \sim K_{\Gamma_k}$. In addition, the lingering lattice path associated to $D'$ is $(p_0, p_1, \ldots , p_k)$, so $D'$ moves.  If $ord_{v_k} (\psi_k) \geq 0$, then $E' + ord_{v_k} (\psi_k)v_k$ is effective, and thus the pair $(D', E'+ord_{v_k} (\psi_k)v_k)$ on $\Gamma_k$ contradicts our inductive hypothesis.
\end{proof}

Next, we see that the possible distributions of chips of $D$ and $E$ is quite limited.

\begin{corollary}
\label{corollary3}
For all $k<g$, there is at least one chip of $D$ or $E$ on $\gamma_k \backslash \{ v_k \}$.
\end{corollary}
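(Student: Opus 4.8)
The plan is to argue by contradiction, using Lemma \ref{lemma3} and the reduction arguments already in place. Suppose for some $k<g$ there is no chip of $D$ or $E$ on $\gamma_k \backslash \{v_k\}$; since we have arranged $D$ and $E$ to be $v_0$-reduced and $D$ to avoid the vertices $v_1,\dots,v_{g-1}$, this says that the $k^{\text{th}}$ loop $\bar\gamma_k$ carries no chips of $D$ at all, and any chip of $E$ on $\bar\gamma_k$ must sit at $v_k$. In particular the restriction $\psi_k$ of $\psi$ to $\bar\gamma_k$ has no zeros or poles in the interior of the loop coming from $2D+E$ — the only contributions to $\mathrm{div}(\psi_k)$ in the interior of $\bar\gamma_k$ come from $-K_{\Gamma_g}$, which is zero there since all non-vertex points have valence $2$. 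So $\psi_k$ is harmonic (balanced) at every interior point of $\bar\gamma_k$ and the only allowed divisorial behavior is at the two endpoints $v_{k-1}$ and $v_k$.

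Next I would exploit the loop structure: $\bar\gamma_k$ is a circle of length $\ell_k+m_k$ with two marked points $v_{k-1}$ and $v_k$ dividing it into two arcs (of lengths $\ell_k$ and $m_k$). A piecewise-linear function on a circle that is harmonic except possibly at $v_{k-1}$ and $v_k$ is affine-linear on each of the two arcs, so it is determined by its values at $v_{k-1}$, $v_k$ and the requirement of being single-valued; its incoming slopes at $v_{k-1}$ and at $v_k$ are then forced, and in particular $\mathrm{ord}_{v_k}(\psi_k)$ is pinned down by the slope data. The key numerical input is Corollary \ref{corollary1}, which says $\mathrm{ord}_{v_k}(\psi_k) < 0$; combined with the analogous statement for $\mathrm{ord}_{v_{k-1}}$ coming from the complementary side and the genericity hypothesis on the ratios $\ell_i/m_i$, one shows that the two affine pieces cannot be glued consistently unless the slopes of $\psi_k$ along the two arcs take values that the genericity assumption rules out (the relevant slopes would have to be two positive integers summing to at most $2g-2$ whose ratio equals $\ell_k/m_k$). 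This is the step I expect to be the main obstacle: carefully bounding the slopes using Corollary \ref{corollary1} (and Lemma \ref{lemma3} applied at both $k$ and $k-1$) tightly enough that the genericity hypothesis is actually triggered, rather than merely consistent.

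Concretely, the order of operations is: (1) reduce to the assertion ``$\bar\gamma_k$ carries no interior chip of $2D+E$,'' using the $v_0$-reducedness and the vertex-avoidance normalization; (2) conclude $\psi_k$ is affine on each of the two arcs of $\bar\gamma_k$; (3) read off $\mathrm{ord}_{v_{k-1}}(\psi_k)$ and $\mathrm{ord}_{v_k}(\psi_k)$ in terms of the two arc-slopes $s_\ell, s_m$ and the values of $\psi$ at the endpoints, using that the function returns to its starting value after going around the loop to get a single linear relation $\ell_k s_\ell = m_k s_m$ (up to signs/conventions); (4) invoke Corollary \ref{corollary1} at $v_k$ and at $v_{k-1}$ to force $s_\ell$ and $s_m$ to be nonzero integers of controlled sign and size with $|s_\ell| + |s_m| \le 2g-2$; (5) observe that $\ell_k s_\ell = m_k s_m$ with $s_\ell, s_m$ positive integers summing to at most $2g-2$ contradicts the genericity hypothesis on $\ell_k/m_k$. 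I would present steps (2)–(3) as a short self-contained lemma about harmonic functions on a two-marked circle if that makes the slope bookkeeping cleaner, and otherwise inline it.
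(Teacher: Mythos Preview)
Your approach is essentially the paper's: contradiction, linearity of $\psi_k$ on each arc, the loop--closing relation, and a contradiction with the genericity of $\ell_k/m_k$ via the slope bound from Corollary~\ref{corollary1}. The only place you overcomplicate things is step~(4): you do not need anything at $v_{k-1}$, because the two incoming slopes of $\psi_k$ at $v_k$ already sum to $\mathrm{ord}_{v_k}(\psi_k)$, and since the loop relation forces them to have the same sign (and to be nonzero, as $\mathrm{ord}_{v_k}(\psi_k)<0$), you immediately get $|s_\ell|+|s_m|=|\mathrm{ord}_{v_k}(\psi_k)|\le 2(g-k)\le 2g-2$ from Corollary~\ref{corollary1} alone---so your anticipated ``main obstacle'' is not one.
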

\begin{proof}
Suppose that there is some positive integer $k < g$ such that neither $D$ nor $E$ has a chip on $\gamma_k \backslash \{ v_k \}$. Let $x$ be the slope of $\psi$ on the longer edge of $\bar{\gamma_k}$. The slope of $\psi$ on the shorter edge of $\bar{\gamma_k}$ must then be $x + ord_{v_k} (\psi_k)$.

\begin{figure}[!h]
\centering
\includegraphics[width=.25\textwidth]{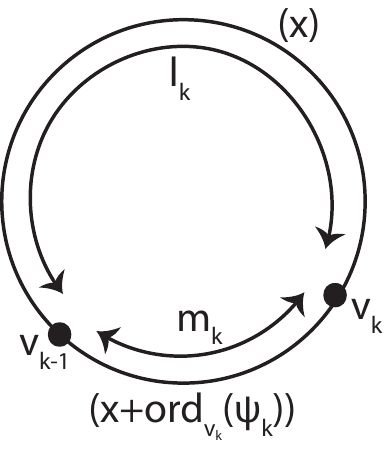}
\caption{The slopes  of $\psi$ (outside the loop) and distances (inside the loop) on $\bar{\gamma_k}$}	
\label{fig:Slopenochip}
\end{figure}

We have the following equation:
\begin{equation*}
x \ell_k + (x + ord_{v_k} (\psi_k))m_k = 0
\end{equation*}
Which gives
\begin{equation}
\label{eqn:lk/mk}
	\frac{\ell_k}{m_k} = \frac{x + ord_{v_k} (\psi_k)}{-x}
\end{equation}

The numerator and the denominator of the right hand side are integers which add up to $ord_{v_k} (\psi_k)$.  But, by Corollary \ref{corollary1}, we have $\vert ord_{v_k} (\psi_k) \vert \leq 2(g-k) \leq 2(g-1)$, contradicting the genericity condition on the edge lengths of $\Gamma_g$.
\end{proof}

\begin{corollary}
\label{corollary4}
Neither $D$ nor $E$ has a chip on $\gamma_g$.
\end{corollary}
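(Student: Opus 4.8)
The plan is to show that almost none of $2D+E$ is carried by the last loop $\bar{\gamma_g}$, using Lemma \ref{lemma3} together with Corollary \ref{corollary1}, and then to eliminate the one remaining configuration by a direct computation with the slopes of $\psi$ on $\bar{\gamma_g}$.

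First I would apply Lemma \ref{lemma3} with $k=g-1$. Deleting the right-punctured loops $\gamma_1',\ldots,\gamma_{g-1}'$ leaves precisely $\bar{\gamma_g}$ (together with the vertex $v_{g-1}$), so the lemma gives $deg(2D+E)\vert_{\bar{\gamma_g}} = ord_{v_{g-1}}(\psi_{g-1}) + 2$. By Corollary \ref{corollary1} with $k=g-1$ we have $-2 \leq ord_{v_{g-1}}(\psi_{g-1}) < 0$, so $deg(2D+E)\vert_{\bar{\gamma_g}} \in \{0,1\}$. A chip of $D$ on $\bar{\gamma_g}$ would contribute $2$ to this number, so $D$ has no chip on $\bar{\gamma_g}$ at all; since $D$ is not supported at $v_{g-1}$ or $v_g$, this already proves the assertion for $D$. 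It also shows $E$ has at most one chip on $\bar{\gamma_g}$. If $E$ has none, or if its single chip lies at $v_{g-1}$, then $E$ has no chip on $\gamma_g = \bar{\gamma_g}\setminus\{v_{g-1}\}$ and we are done. So the only case left to rule out is that $E$ has exactly one chip on $\bar{\gamma_g}$, located at a point $q \neq v_{g-1}$, and $D$ has none.

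In that case $deg(2D+E)\vert_{\bar{\gamma_g}} = 1$, so $ord_{v_{g-1}}(\psi_{g-1}) = -1$. Since neither $D$ nor $E$ has a chip at $v_{g-1}$ and the coefficient of $v_{g-1}$ in $K_{\Gamma_g}$ is $deg(v_{g-1}) - 2 = 2$, the full incoming slope of $\psi$ at $v_{g-1}$ equals $-2$; subtracting the contribution of the loop $\bar{\gamma_{g-1}}$ leaves $ord_{v_{g-1}}(\psi_g) = -1$. At every other point $p$ of $\bar{\gamma_g}$ the coefficient of $p$ in $2D + E - K_{\Gamma_g}$ vanishes, except at $p = q$, where it equals $1$ (recall $deg(v_g) - 2 = 0$, so $v_g$ contributes nothing). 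Hence $div(\psi_g) = q - v_{g-1}$ on the loop $\bar{\gamma_g}$.

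The last step is to observe that this is impossible. The loop $\bar{\gamma_g}$ is a metric circle of total length $\ell_g + m_g$, and the order of $\psi_g$ vanishes at every point of $\bar{\gamma_g}$ other than $v_{g-1}$ and $q$ (in particular at $v_g$), so $\psi_g$ is linear of a single constant integer slope on each of the two arcs into which $v_{g-1}$ and $q$ divide the circle. These two slopes differ by $1$, coming from $ord_q(\psi_g) = 1$, and the requirement that $\psi_g$ close up around the circle then forces the length of one of the two arcs to be a nonzero integer multiple of $\ell_g + m_g$, which is absurd since that length lies strictly between $0$ and $\ell_g + m_g$. This contradiction finishes the proof. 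I expect this last, elementary slope computation to be the only even mildly delicate point; note that, unlike Corollary \ref{corollary3}, it uses nothing about the genericity of the edge lengths, and that the cases $g \leq 3$ are already handled by the base of the induction.
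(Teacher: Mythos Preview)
Your proposal is correct and follows essentially the same route as the paper: both use Lemma~\ref{lemma3} with $k=g-1$ together with Corollary~\ref{corollary1} to bound $\deg(2D+E)\vert_{\bar{\gamma_g}}\le 1$, conclude immediately that $D$ has no chip there, and then rule out a single $E$-chip at $q\neq v_{g-1}$ by a slope computation on the circle $\bar{\gamma_g}$. The only difference is cosmetic: the paper phrases the final contradiction as ``$d_1x_1+d_2x_2=0$ with $x_1,x_2$ consecutive integers is impossible,'' whereas you rearrange this to $d_2=s_1(\ell_g+m_g)$ with $s_1\in\mathbb{Z}$; note that your phrasing ``nonzero integer multiple'' should really be ``integer multiple'' (the case $s_1=0$ gives $d_2=0$, also absurd), but the argument is the same.
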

\begin{proof}By corollary \ref{corollary1}, $ord_{v_{g-1}} (\psi_{g-1})\leq -1$. By Lemma \ref{lemma3}, it follows that $(2D + E)$ has at most $1$ chip on $\bar{\gamma_g}$, which means $D$ has no chip on $\gamma_g$.

Suppose that $E$ has a chip on $\gamma_g$. The point containing this chip and $v_{g-1}$ divide $\bar{\gamma_g}$ into two edges. Let $d_1, d_2$ denote their lengths, and $x_1, x_2$ denote the slopes of $\psi$ on these edges. We have $d_1 x_1 + d_2 x_2 = 0$. Since $d_1$ and $d_2$ are positive, this implies that either $x_1 = x_2 = 0$ or $x_1$ and  $x_2$ are of opposite sign. Given that $x_1$ and $x_2$ are two consecutive integers, neither case is possible.
\end{proof}

Finally, we see that the non-lingering steps in the lingering lattice path determine the incoming slopes of $\psi_k$ precisely.

\begin{proposition}
\label{corollary5}
Let $k$ be an integer in the range $1 < k < g$.  Suppose that $D$ has a chip on $\gamma_k$, $E$ does not, and furthermore, $p_k = p_{k-1} + 1$.  Then $2p_k + ord_{v_k} (\psi_k) = 0$.
\end{proposition}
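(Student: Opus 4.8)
The idea is to localize the whole argument to the single loop $\bar{\gamma_k}$, which I identify with the circle $\mathbb{R}/(\ell_k+m_k)\mathbb{Z}$ by placing $v_{k-1}$ at the origin and $v_k$ at the point $m_k$; this matches the orientation convention built into the lingering lattice path, in which the counterclockwise distance from $v_{k-1}$ to $v_k$ is $m_k$ and the chip parameter $x_k$ is measured counterclockwise from $v_{k-1}$. The one ambient fact I will use is the standard description of the Jacobian of a circle: a divisor $\sum a_i q_i$ of degree zero on $\mathbb{R}/L\mathbb{Z}$ is principal if and only if $\sum a_i q_i \equiv 0$ in $\mathbb{R}/L\mathbb{Z}$ (one writes down the piecewise linear function with the prescribed orders, unique up to an additive constant, and checks when it is single-valued with integer slopes).

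First I would translate the hypothesis into a linear equivalence on $\bar{\gamma_k}$. Let $u$ be the chip of $D$ on $\gamma_k$; since $u\in\gamma_k$ and $D$ is not supported at $v_k$, the point $u$ is an interior point of one of the two edges of $\bar{\gamma_k}$, lying at counterclockwise distance $x_k$ from $v_{k-1}$. Because $p_k=p_{k-1}+1$, the $k$-th step of the lattice path is an $e_0$-step (the only way a step can increase for $r=1$), so by definition $x_k\equiv (p_{k-1}+1)m_k\pmod{\ell_k+m_k}$; by the criterion above this says precisely that $u+p_{k-1}v_{k-1}\sim p_k v_k$ on $\bar{\gamma_k}$, and hence $2u+2p_{k-1}v_{k-1}\sim 2p_k v_k$.

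Next I would compute $div(\psi_k)$. Since $K_{\Gamma_g}$ is supported away from the interior of $\bar{\gamma_k}$, since $E$ has no chip on $\gamma_k$, and since $D$ contributes only the chip $u$ there, $ord_p(\psi_k)=ord_p(\psi)$ equals $2$ at $p=u$ and $0$ at every other interior point of $\bar{\gamma_k}$. Writing $\alpha=ord_{v_{k-1}}(\psi_k)$ and $\beta=ord_{v_k}(\psi_k)$ this gives $div(\psi_k)=2u+\alpha v_{k-1}+\beta v_k$; as $div(\psi_k)$ has degree zero, $\alpha+\beta=-2$, and in particular $2u\sim -\alpha v_{k-1}-\beta v_k$ on $\bar{\gamma_k}$. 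Comparing with the equivalence from the previous step, the divisor $n(v_{k-1}-v_k)$ is principal on $\bar{\gamma_k}$, where $n:=2p_k+\beta$ (which also equals $2p_{k-1}-\alpha$, since $\alpha+\beta=-2=2p_{k-1}-2p_k$). By the Jacobian criterion this forces $n\,m_k\equiv 0\pmod{\ell_k+m_k}$.

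It remains to deduce $n=0$. If $n\ne 0$, write $n m_k=N(\ell_k+m_k)$ with $N$ a nonzero integer; then $(n-N)m_k=N\ell_k$, so $\ell_k/m_k=(n-N)/N$ is a ratio of two nonzero integers of the same sign, hence a ratio of positive integers whose sum is $|n|$. On the other hand, Corollary \ref{corollary1} gives $-2(g-k)\le\beta\le -1$, while $D$ moving together with $2D+E\sim K_{\Gamma_g}$ forces $1\le p_k\le \deg D\le g-1$; hence $n$ lies between $2+2k-2g$ and $2g-3$, so $|n|\le 2g-3\le 2g-2$, contradicting the genericity of the edge lengths of $\Gamma_g$. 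Therefore $n=0$, i.e. $2p_k+ord_{v_k}(\psi_k)=0$. The step that needs the most care is this last one — checking that the integer $n=2p_k+ord_{v_k}(\psi_k)$ is small enough ($|n|\le 2g-2$) for the genericity hypothesis to apply — together with keeping straight the orientation convention, so that the lattice-path condition genuinely reads $x_k\equiv (p_{k-1}+1)m_k$ rather than the analogous condition with $\ell_k$.
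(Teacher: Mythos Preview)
Your argument is correct and reaches the same endpoint as the paper, but by a genuinely cleaner route. The paper localizes to $\bar{\gamma_k}$ and parameterizes explicitly: it lets $x$ be the slope of $\psi$ on the arc from $v_k$ to the chip $p$, writes the other two slopes as $x-2$ and $x+ord_{v_k}(\psi_k)$, imposes that the total change of $\psi$ around the loop vanishes, and solves for $\ell_k/m_k$ as a ratio of integers summing to $2p_k+ord_{v_k}(\psi_k)$; it also splits into cases according to whether the chip sits on the longer or shorter arc. You instead package everything via the Abel--Jacobi map of the circle: the lattice-path condition becomes the equivalence $u+p_{k-1}v_{k-1}\sim p_k v_k$, the computation of $div(\psi_k)$ gives $2u\sim-\alpha v_{k-1}-\beta v_k$, and subtracting yields $n(v_{k-1}-v_k)\sim 0$ with $n=2p_k+\beta$, from which genericity forces $n=0$. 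The advantages of your version are that it is coordinate-free on the loop (no case split on which arc carries the chip) and that the integer $n$ appears directly rather than being recovered after algebraic manipulation; the paper's explicit-slope computation, on the other hand, is entirely self-contained and does not invoke the Jacobian description. Your bound $|n|\le 2g-3$ is slightly sharper than the paper's $|n|<2g-2$, but of course either suffices for the genericity contradiction. The one place to be careful, which you flag yourself, is the orientation convention placing $v_k$ at $m_k$; this is indeed what makes the lattice-path congruence $x_k\equiv(p_{k-1}+1)m_k$ translate into the stated linear equivalence, and it is consistent with the paper's conventions.
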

\begin{proof}
Let $p$ be the point on $\gamma_k$ containing a chip of $D$. Since $p_k = p_{k-1} + 1$, $p$ is a distance of $p_{k-1} m_k$ from $v_k$ in the counterclockwise direction.  Suppose this point lies on the longer arc of $\bar{\gamma_k}$.  The case where $p$ lies on the shorter arc is similar.

Let $d$ be the distance between $v_k$ and $p$ in the counterclockwise direction, and write $x$ for the slope of $\psi$ on this arc.  Then the slope of $\psi$ on the arc from $p$ to $v_{k-1}$ in the counter clockwise direction must be $x-2$, and the slope of $\psi$ on the arc from $v_{k-1}$ to $v_k$ in the counter clockwise direction must be $x + ord_{v_k} (\psi_k)$ (see figure \ref{fig:SlopeD}).

\begin{figure}[!h]
\centering
\includegraphics[width=.25\textwidth]{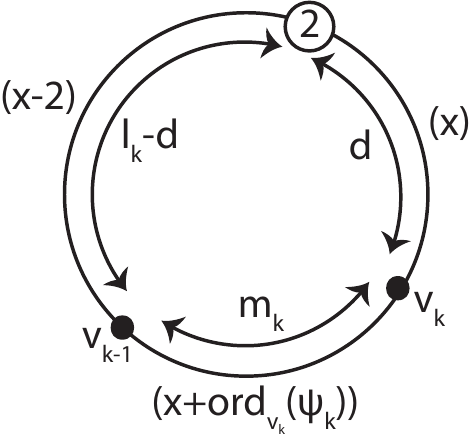}
\caption{Slopes of $\psi$ (outside the loop) and distances (inside the loop) on $\bar{\gamma_k}$}	
\label{fig:SlopeD}
\end{figure}

We have the following equation:
\[ xd + (x-2)(\ell_k-d) + (x + ord_{v_k} (\psi_k))m_k = 0, \]
which gives
\[ \frac{\ell_k}{m_k} (x-2) + 2\frac{d}{m_k} + ord_{v_k} (\psi_k) + x = 0 . \]
Note that $d = p_{k-1} m_k + n(\ell_k + m_k)$ for some non-negative integer $n$, so
\[ \frac{\ell_k}{m_k} (x-2) + 2p_{k-1} + 2n\frac{\ell_k}{m_k} + 2n + ord_{v_k} (\psi_k) + x = 0, \]
or
\begin{equation}
\label{eqn:lk/mk1}
	\frac{\ell_k}{m_k} (x-2+2n) + 2p_{k-1} + 2n + ord_{v_k} (\psi_k) + x = 0.
\end{equation}
Assuming that $x \neq 2 - 2n$, we have:
\[\frac{\ell_k}{m_k} =  \frac{ 2p_{k-1} + 2n + ord_{v_k} (\psi_k) + x}{2-x-2n} . \]

The numerator and the denominator of the right hand side are integers that add up to $2p_{k-1} + ord_{v_k} (\psi_k) +2 = 2p_k + ord_{v_k} (\psi_k)$.  On the other hand, it is clear that $p_k \leq g-1$, so $0 < 2p_k \leq 2g - 2$. Also, by Corollary \ref{corollary1}, $2(k-g) \leq ord_{v_k} (\psi_k) < 0$.  Adding these inequalities gives $2(k-g) < 2p_k + ord_{v_k} (\psi_k) < 2g - 2$, which leads to $\vert 2p_k + ord_{v_k} (\psi_k) \vert < 2g - 2$, contradicting the genericity condition on the edge lengths of $\Gamma_g$.

Hence, $x = 2 - 2n$.  Then equation \ref{eqn:lk/mk1} becomes $2p_{k-1} + 2n + ord_{v_k} (\psi_k) + 2 - 2n= 0$, or $2p_k + ord_{v_k} (\psi_k) = 0$.
\end{proof}

\subsection{Proof of the Main Theorem}

We now continue the proof of our original claim.  As in the proof of Corollary \ref{corollary1}, our main approach is to remove loops from the graph $\Gamma_g$, thereby obtaining a contradiction to the inductive hypothesis.

\begin{proof}[Proof of Theorem \ref{mainthm}]

First, suppose that the lingering lattice path associated to $D$ has a lingering step.  In other words, that $p_k = p_{k-1}$ for some positive integer $k$.  By definition, $D$ has a chip on $\gamma_k$. It follows from Corollary \ref{corollary4} that $k \neq g$.

Remove $\gamma_k$ from $\Gamma_g$ and identify $v_k$ with $v_{k-1}$ to obtain a new graph $\Gamma_{g-1}$ (See Figure \ref{fig:Remove1loop}). We may define a continuous piecewise linear function $\psi'$ on $\Gamma_{g-1}$ such that on every edge of $\Gamma_{g-1}$, the slope of $\psi'$ is the same as the slope of $\psi$. In the remainder of the proof, we will call $\psi'$ the \textit{modified restriction} of $\psi$ to the new graph - in this case $\Gamma_{g-1}$.

\begin{figure}[!h]
\centering
\includegraphics[width=.5\textwidth]{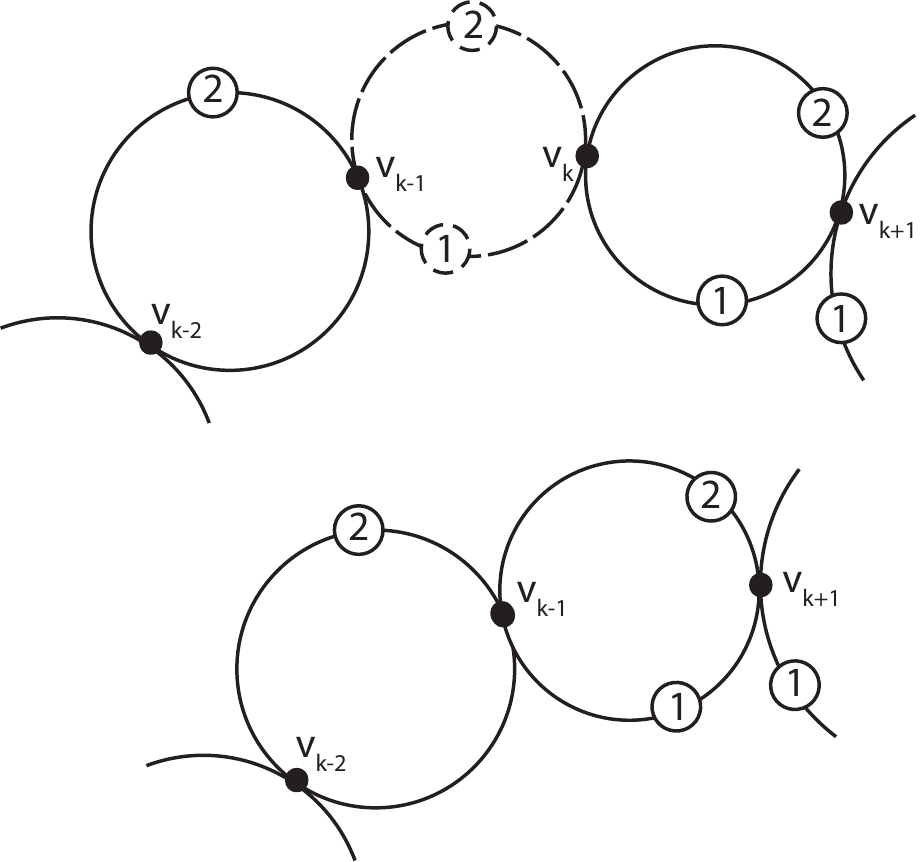}
\caption{Removing $\gamma_k$}
\label{fig:Remove1loop}
\end{figure}

By the definition of $\psi'$, for all $p \in \Gamma_{g-1}$,
\begin{displaymath}
ord_p (\psi') = \left\{ \begin{array}{ll} ord_{v_{k-1}} (\psi_{k-1}) + ord_{v_k} (\psi_{k+1}) & \mbox{if $p = v_{k-1}$;}\\ ord_p (\psi) & \mbox{otherwise.} \end{array} \right.
\end{displaymath}

Therefore, letting $D'$ and $E'$ be the restriction of $D$ and $E$ to $\Gamma_{g-1}$, we see that $div(\psi') = 2D' + E' - K_{\Gamma_{g-1}} + zv_{k-1}$ for some integer $z$.

We have the following equations:
\begin{eqnarray*}
deg \left( 2D' + E' - K_{\Gamma_{g-1}}\right) + z = 0\\
deg \left( 2D + E - K_{\Gamma_g} \right) = 0\\
deg \left(K_{\Gamma_g} \right) - deg \left(K_{\Gamma_{g-1}} \right) = 2\\
deg (2D + E) - deg (2D' + E') = \mbox{$2$ or $3$}
\end{eqnarray*}

It easily follows that $z = \mbox{$0$ or $1$}$. Note that $E'$ is effective, as is $E' + zv_{k-1}$.  On the other hand, note that $p_k = p_{k-1}$, so the lingering lattice path associated to $D'$ is $(p_0, p_1, \ldots , p_{k-1}, p_{k+1}, \ldots , p_g)$.  Thus, $D'$ moves.  It follows that the pair $(D', E' + zv_{k-1})$ on $\Gamma_{g-1}$ contradicts our inductive hypothesis.

It remains to prove the theorem in the case where the lingering lattice path associated to $D$ has no lingering steps.  By Corollaries \ref{corollary3} and \ref{corollary4}, every cell $\gamma_k$ for $k<g$ admits one of the following descriptions:
\begin{enumerate}
\item  (Type-$(E)$ loop)  The cell contains a chip of $E$, but not $D$.
\item  (Type-$(D)$ loop)  The cell contains a chip of $D$, but not $E$.
\item  (Type-$(D,E)$ loop)  The cell contains chips of both $D$ and $E$.
\end{enumerate}
We break this into two cases:

\textbf{Case 1:  There exists a type-$(E)$ loop next to a type-$(D,E)$ loop.}

Suppose that $\gamma_k$ and $\gamma_{k+1}$ form such a pair of loops.  It follows from Corollary \ref{corollary4} that $k < g-1$.  Remove both $\gamma_k$ and $\gamma_{k+1}$ and identify $v_{k+1}$ with $v_{k-1}$ to obtain a new graph $\Gamma_{g-2}$. Let $D', E'$ be the restrictions of $D, E$ to $\Gamma_{g-2}$, and $\psi'$ be the modified restriction of $\psi$ to $\Gamma_{g-2}$. It is clear that $E'$ is effective.

\begin{figure}[!h]
\centering
\includegraphics[width=.6\textwidth]{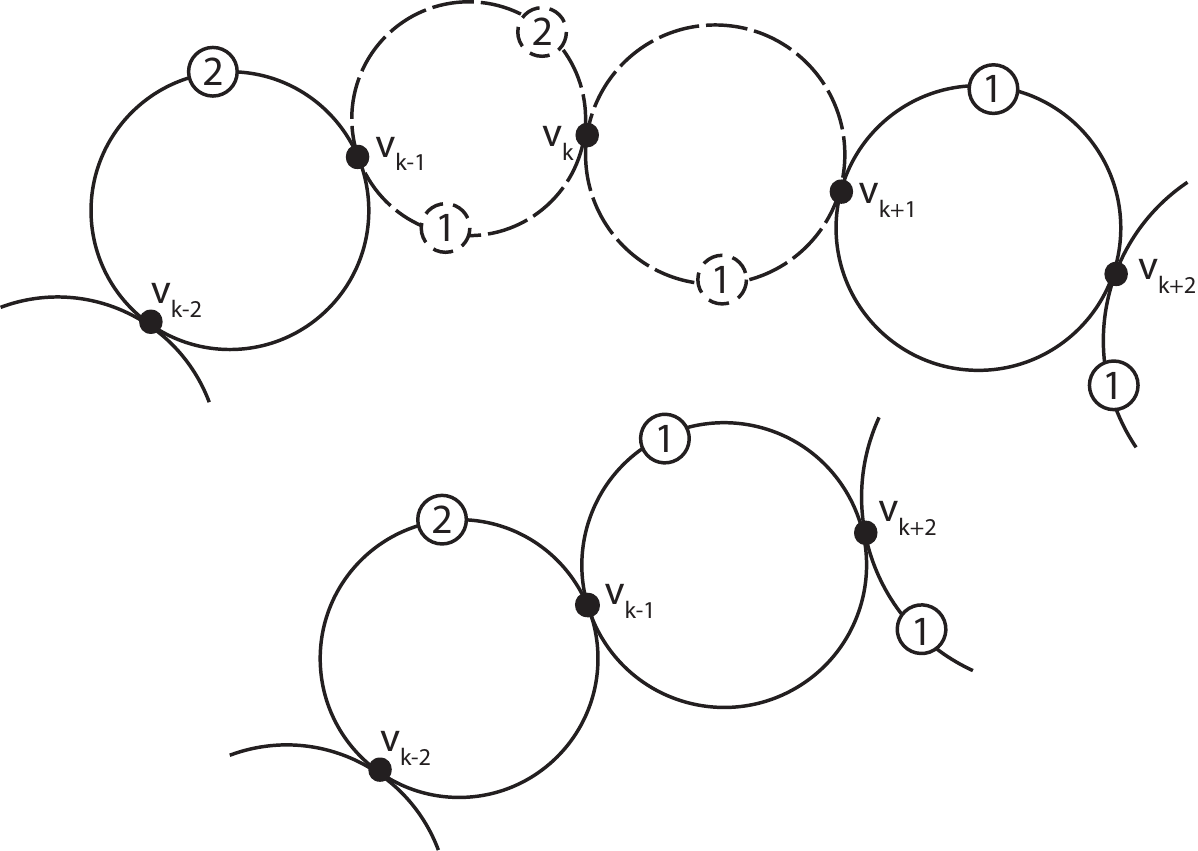}
\caption{Removing $\bar{\gamma_k}$ and $\bar{\gamma_{k+1}}$}
\label{fig:Remove2loops.eps}
\end{figure}

By the definition of $\psi'$, for all $p \in \Gamma_{g-2}$,
\begin{displaymath}
ord_p (\psi') = \left\{ \begin{array}{ll} ord_{v_{k-1}} (\psi_{k-1}) + ord_{v_{k+1}} (\psi_{k+2}) & \mbox{if $p = v_{k-1}$;}\\ ord_p (\psi) & \mbox{otherwise.} \end{array} \right.
\end{displaymath}

In a similar way to the case above, we see that $div(\psi') = 2D' + E' - K_{\Gamma_{g-2}} + zv_{k-1}$ for some integer $z$.  We have the following equations:
\begin{eqnarray*}
deg \left( 2D' + E' - K_{\Gamma_{g-2}}\right) + z = 0\\
deg \left( 2D + E - K_{\Gamma_g} \right) = 0\\
deg \left(K_{\Gamma_g} \right) - deg \left(K_{\Gamma_{g-2}} \right) = 4\\
deg (2D + E) - deg (2D' + E') = 4
\end{eqnarray*}

It easily follows that $z = 0$, which means $2D' + E' \sim K_{\Gamma_{g-2}}$.  On the other hand, note that $p_{k+1} = p_{k-1}$, so the lingering lattice path associated to $D'$ is $(p_0, p_1, \ldots , p_{k-1}, p_{k+2}, \ldots , p_g)$. Thus, $D'$ moves.  As before, we see that the pair $(D', E')$ on $\Gamma_{g-2}$ contradicts our inductive assumption.

\textbf{Case 2:  There is no type-$(E)$ loop next to any type-$(D,E)$ loop.}

If there is no type-$(D)$ loop, then the cells $\gamma_k$ for $k<g$ are either all type-$(E)$ or all type-$(D,E)$. The former gives $p_g = p_0 - g \leq -1$, while the latter gives $deg(2D+E) = p_0 + 3(g-1) > 2(g-1)$,  both of which are impossible.

If after the last type-$(D)$ loop $\gamma_k$, there is no type-$(D,E)$ loop, we have $p_j = p_{j-1} - 1$ for all $j > k$. Hence $p_g = p_k + k - g$.  By Corollary \ref{corollary1} and Proposition \ref{corollary5}, however, we have $2p_k = - ord_{v_k} (\psi_k) \leq 2(g-k)$.  It follows that $p_g \leq 0$, contradicting our assumption that $D$ moves.

Otherwise, the last type-$(D)$ loop $\gamma_k$ is followed by $g-k-1$ type-$(D,E)$ loops $(g-k-1>0)$.  By Lemma \ref{lemma3}, $ord_{v_k} (\psi_k) + 2(g-k) \geq 3(g-k-1)$.  By Proposition \ref{corollary5}, however, $2p_k + ord_{v_k} (\psi_k) = 0$.  It follows that $2p_k \leq k + 3 - g$, but this is impossible because $k < g-1$ and $2p_k \geq 2$.

Therefore, the graph $\Gamma_g$ does not admit a positive-rank divisor $D$ such that $K_{\Gamma_g} - 2D$ is linearly equivalent to an effective divisor.

\end{proof}

\bibliography{TropicalGP}

\begin{thebibliography}{ACGH85}

\bibitem[ACGH85]{ACGH}
E.~Arbarello, M.~Cornalba, P.~A. Griffiths, and J.~Harris.
\newblock {\em Geometry of algebraic curves. {V}ol. {I}}, volume 267 of {\em
  Grundlehren der Mathematischen Wissenschaften [Fundamental Principles of
  Mathematical Sciences]}.
\newblock Springer-Verlag, New York, 1985.

\bibitem[Bak08]{Baker}
Matthew Baker.
\newblock Specialization of linear systems from curves to graphs.
\newblock {\em Algebra Number Theory}, 2(6):613--653, 2008.
\newblock With an appendix by Brian Conrad.

\bibitem[BN07]{BakerNorine}
Matthew Baker and Serguei Norine.
\newblock Riemann-{R}och and {A}bel-{J}acobi theory on a finite graph.
\newblock {\em Adv. Math.}, 215(2):766--788, 2007.

\bibitem[CDPR]{CDPR}
F.~Cools, J.~Draisma, S.~Payne, and E.~Robeva.
\newblock A tropical proof of the brill-noether theorem.
\newblock {\em J. Reine Angew. Math.}

\bibitem[EH83]{EHPetri}
D.~Eisenbud and J.~Harris.
\newblock A simpler proof of the {G}ieseker-{P}etri theorem on special
  divisors.
\newblock {\em Invent. Math.}, 74(2):269--280, 1983.

\bibitem[GH80]{GH}
Phillip Griffiths and Joseph Harris.
\newblock On the variety of special linear systems on a general algebraic
  curve.
\newblock {\em Duke Math. J.}, 47(1):233--272, 1980.

\bibitem[Gie82]{Gieseker-Petri}
D.~Gieseker.
\newblock Stable curves and special divisors: {P}etri's conjecture.
\newblock {\em Invent. Math.}, 66(2):251--275, 1982.

\bibitem[Laz86]{Laz}
Robert Lazarsfeld.
\newblock Brill-{N}oether-{P}etri without degenerations.
\newblock {\em J. Differential Geom.}, 23(3):299--307, 1986.

\bibitem[Voi02]{Voisin1}
Claire Voisin.
\newblock Green's generic syzygy conjecture for curves of even genus lying on a
  {$K3$} surface.
\newblock {\em J. Eur. Math. Soc. (JEMS)}, 4(4):363--404, 2002.

\bibitem[Voi05]{Voisin2}
Claire Voisin.
\newblock Green's canonical syzygy conjecture for generic curves of odd genus.
\newblock {\em Compos. Math.}, 141(5):1163--1190, 2005.

\end{thebibliography}

\end{document}